\def\@fnsymbol#1{\ensuremath{\ifcase#1\or *\or **\or \ddagger\or \mathsection\or \mathparagraph\or \|\or \#\or \dagger\dagger\or \ddagger\ddagger \else\@ctrerr\fi}}
\newtheorem{theorem}{Theorem}[section]
\newtheorem{corollary}[theorem]{Corollary}
\newtheorem{lemma}[theorem]{Lemma}
\newtheorem{proposition}[theorem]{Proposition}
\theoremstyle{definition}
\newtheorem{definition}[theorem]{Definition}
\theoremstyle{remark} \theoremstyle{remark}
\newtheorem{remark}[theorem]{Remark}
\numberwithin{equation}{section}
\newcommand{\N}{\mathbb{N}}
\newcommand{\R}{\mathbb{R}}
\newcommand{\C}{\mathbb{C}}
\newcommand{\X}{\mathfrak{X}}
\newcommand{\f}{\varphi}
\begin{document}
	\title{\textbf{On Einstein hypersurfaces of a remarkable class of Sasakian manifolds}}
	\author[1]{Dario Di Pinto\thanks{{\em e-mail:} dario.dipinto@uniba.it}}
	\author[2]{Antonio Lotta\thanks{{\em e-mail:} antonio.lotta@uniba.it}}
	\affil[1,2]{Dipartimento di Matematica\\ Università degli Studi di Bari Aldo Moro\\ Via E. Orabona 4, 70125 Bari, Italy.}
	
	\maketitle
	
\begin{abstract}
	We present a non existence result of complete, Einstein hypersurfaces tangent to the Reeb vector field of a regular Sasakian manifold which fibers onto a complex Stein manifold. 
	%To this aim we consider the usual $CR$ structure on the hypersurfaces tangent to $\xi$ and we establish a relationship between the Ricci tensor field an the scalar Levi form determined by normal direction.
\end{abstract}
\medskip

\textbf{\small Key words}: Einstein hypersurface $\cdot$ regular Sasakian manifold $\cdot$ Stein manifold. \\
\textbf{\small Mathematics Subject Classification (2020)}: 53C25, 53C40, 53B25. 

\section{Introduction}

In \cite{Hasegawa}, I. Hasegawa established that a Sasakian space form with nonconstant sectional curvature 
admits no Einstein hypersurfaces.
The aim of this note is to prove a new non existence result concerning Einstein hypersurfaces of a relevant
class of {\em regular} Sasakian manifolds:

\bigskip	
{\bf Theorem.}
{\em	If $(M,\f,\xi,\eta,g)$ is a regular Sasakian manifold which fibers onto a complex Stein manifold, then $M$ does not admit any complete Einstein hypersurface tangent to $\xi$.}

\medskip
 We recall that a contact manifold $(M,\eta)$ is called
 regular provided the Reeb vector field $\xi$ of the contact form $\eta$ is, i.e. it determines a regular 1-dimensional foliation 
on $M$, so that the space $B=M/\xi$ of maximal integral curves
of $\xi$ is a manifold. When $M$ carries a  Sasakian  metric $g$ associated to $\eta$, yielding a Sasakian
structure $(\f,\xi,\eta,g)$ (we use the standard terminology and notation according to \cite{Blair}),
since $\mathcal{L}_\xi\varphi=0$ and $\mathcal{L}_\xi g=0$, $g$ induces in a natural way a metric $g'$ on $M/\xi$ and $\varphi$ also descends to an almost complex structure $J$. Denoting by $\pi:M\to B$ the canonical projection, it turns out by construction that 
$\pi$ is a Riemannian submersion with $\ker(d\pi)_x=\mathbb{R}\xi_x$ for every $x\in N$, and 
$$d\pi\circ\varphi=J\circ d\pi$$
and, morover $(B,J,g')$ is a K\"ahler manifold (see for instance \cite{Reckziegel} and \cite{Ogiue}).
Hence our assumption on the Sasakian manifold is that $B$, as a complex manifold, can be realized (up to a biholomorphism) as a closed
complex submanifold of some Euclidean space $\mathbb{C}^d$.

\medskip
For instance, according to a result due to H. Wu (\cite{Dillen-Verstraelen}, Theorem 4.9), it is known that every simply connected, complete K\"ahler manifold with non-positive sectional curvature is a Stein manifold;  in particular,  the Hermitian symmetric spaces of non-compact type are Stein manifolds, hence Takahashi's Sasakian globally $\varphi$-symmetric spaces of non-compact type (see \cite{Takahashi}) provide a wide class of examples of Sasakian manifolds to which our result applies.

\medskip
The proof of our result makes use of the natural $CR$ structure of $CR$ codimension 2 which is induced over any smooth hypersurface
$N\subset M$, under the assumption that $N$ is everywhere tangent to the Reeb vector field $\xi$
(see for instance \cite{Munteanu} and section \ref{prelim}). 
We establish a basic formula relating the Ricci tensor of $N$ and the trace of a distinguished scalar Levi form 
of this $CR$ structure (see \eqref{Riccieq}), implying that, in the Einstein case, $\pi(N)$ is a weakly pseudoconcave real hypersurface
of $B$. Hence a non compactness result by
D. Hill and M. Nacinovich for weakly pseudoconcave $CR$ submanifolds of Stein manifolds \cite{Hill-Nacinovich} is
invoked to get the conclusion.

\section{Preliminaries}
\label{prelim}
	Let's start by recalling the definitions of $CR$ manifolds, Levi-Tanaka forms and scalar Levi forms.
	In the following, given a vector bundle $E$ over a smooth differential manifold $M$, we will denote by $\Gamma(E)$ the $\mathcal{C}^\infty(M)$-module of global smooth sections of $E$.\\
	
	Let $M$ be a smooth real manifold of dimension $n$, and let $m,k\in\N$ such that $2m+k=n$. If $HM$ is a real vector subbundle of rank $2m$ of the tangent bundle $TM$ and $J:HM\to HM$ is a bundle isomorphism such that $J^2=-Id$, the couple $(HM,J)$ is called a \textit{CR structure} on $M$ if the following properties hold for all $X,Y\in\Gamma(HM)$:
	\begin{enumerate}[label=(\roman*)]
		\item $[JX,JY]-[X,Y]\in\Gamma(HM)$;
		\item $N_J(X,Y):=[JX,JY]-J[JX,Y]-J[X,JY]-[X,Y]=0$.
	\end{enumerate}
	In this case $(M,HM,J)$ is called a \textit{CR manifold of type (m,k)} and $m,k$ are the \textit{CR dimension} and the \textit{CR codimension} of the $CR$ structure, respectively.\\
	
	\begin{remark}\label{CRsubmanifold}
		Let $S$ be a real submanifold of a complex manifold $(M,J)$ and for any $p\in S$ set $$H_pS:=T_pS\cap J(T_pS).$$ 
		Because of the integrability of $J$, the couple $(HS,J_{|HS})$ canonically defines a $CR$ structure on $S$ if the dimension of $H_pS$ is constant. In this case $S$ is termed a \textit{CR submanifold} of $M$.\\ 
		In particular, this condition is always satisfied when $S$ is a real hypersurface of $M$ and hence $S$ is a $CR$ manifold of $CR$ codimension 1. 
	\end{remark}
\medskip

	\begin{definition}
		Let $(M,HM,J)$ be a $CR$ manifold of type $(m,k)$. Given a point $x\in M$, the \textit{Levi-Tanaka form of $M$ at $x$} is the bilinear map $$L_x:H_xM\times H_xM\to T_xM/H_xM$$ defined by 
		\begin{equation}\label{Levi-Tanaka}
		L_x(X,Y):=p_x([\tilde{X},J\tilde{Y}]_x)\quad \forall X,Y\in H_xM,
		\end{equation}
		where $\tilde{X},\tilde{Y}\in\Gamma(HM)$ are two arbitrary extensions of $X,Y$ and $p:TM\to TM/HM$ is the canonical projection on the quotient bundle $TM/HM$.
	\end{definition}
	It is known that $L_x$ is well defined, i.e. the value $p_x([\tilde{X},J\tilde{Y}]_x)$ only depends on the values of $\tilde{X},\tilde{Y}$ at $x$, that is on $X$ and $Y$.\\
	Moreover, according to (i) above, $L_x$ turns to be a vector valued symmetric Hermitian form on the holomorphic tangent space $H_xM$ with respect to the complex structure $J:=J_x$, that is
	\begin{equation}
	L_x(X,Y)=L_x(JX,JY),\quad L_x(X,Y)=L_x(Y,X)
	\end{equation}
	for all $X,Y\in H_xM$.
	
	\medskip
	Given a point $x$ on the $CR$ manifold $(M,HM,J)$, we will denote by $$H^0_xM:=\{\omega\in T^*_xM\ |\ \omega(X)=0\quad \forall X\in H_xM\}$$
	the annihilator of $H_xM\subset T_xM$. Then we recall the following definition.
	\begin{definition}
		Let $(M,HM,J)$ be a $CR$ manifold, $x\in M$ and $\omega\in H^0_xM$. The Hermitian form \begin{equation}
		\mathfrak{L}_\omega:H_xM\times H_xM\to\R\quad\text{s.t.}\quad \mathfrak{L}_\omega(X,Y):=\omega L_x(X,Y)
		\end{equation} 
		is called the \textit{scalar Levi form determined by $\omega$ at $x$}.
	\end{definition}
	
   The next lemma represents a sort of naturality property of the Levi-Tanaka form with respect a particular class of maps between $CR$ manifolds which preserve the $CR$ structures.
	\begin{definition}
		Let $(M,HM,J)$ and $(N,HN,J')$ be two $CR$ manifolds. A smooth map $\pi:M\to N$ is called \textit{CR map} if $d\pi(HM)\subset HN$ and $d\pi\circ J=J'\circ d\pi$.
	\end{definition}
	\begin{lemma}\label{naturality lemma}
		Let $(M,HM,J)$ and $(N,HN,J')$ be two $CR$ manifolds having the same $CR$ dimension, let $\pi:M\to N$ be a $CR$ map and assume that for every $x\in M$,  $(d\pi)_x:H_xM\to H_{\pi(x)}N$ is an isomorphism. Then, given $x\in M$, the following diagram commutes:
		$$\xymatrix{H_xM\times H_xM \ar[r]^{L_x}\ar[d]_{\pi_*\times\pi_*} &       T_xM/H_xM\ar[d]^{\pi_*}\\ H_yN\times H_yN \ar[r]_{L'_y} &T_yN/H_yN}$$
		where $y=\pi(x)$, $\pi_*=(d\pi)_x$ and $L_x,L'_y$ are the Levi-Tanaka forms of $M$ and $N$ respectively.
	\end{lemma}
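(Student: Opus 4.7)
The plan is to fix $x \in M$ and verify commutativity pointwise by producing, near $x$ and $y = \pi(x)$, a pair of $\pi$-related extensions of the given vectors: once we have such extensions, the commutativity of brackets with $\pi$-related vector fields does the rest.

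First I would choose a smooth local frame $e_1, \dots, e_{2m}$ of $HN$ on a neighborhood of $y$. The hypothesis that $(d\pi)_z \colon H_zM \to H_{\pi(z)}N$ is a linear isomorphism at every point lets me define $\tilde{e}_i(z) := \bigl((d\pi)_z|_{H_zM}\bigr)^{-1}\!\bigl(e_i(\pi(z))\bigr)$ on a neighborhood of $x$; these are smooth sections of $HM$ (smoothness of the inverse of a smooth family of isomorphisms), and they are by construction $\pi$-related to the $e_i$.

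Next, given $X, Y \in H_xM$, I would write $\pi_*X = \sum a^i\, e_i(y)$, extend the $a^i$ to smooth functions $\bar{\alpha}^i$ near $y$, and set
\begin{equation*}
U := \sum \bar{\alpha}^i\, e_i \in \Gamma(HN), \qquad \tilde{X} := \sum (\bar{\alpha}^i \circ \pi)\, \tilde{e}_i \in \Gamma(HM).
\end{equation*}
Then $\tilde{X}$ extends $X$, $U$ extends $\pi_*X$, and $\tilde{X}$ is $\pi$-related to $U$; an analogous construction furnishes $\tilde{Y}$ and $V$ extending $Y$ and $\pi_*Y$, with $\tilde{Y}$ $\pi$-related to $V$. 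Because $\pi$ is a $CR$ map, $d\pi \circ J = J' \circ d\pi$, and hence $J\tilde{Y}$ is $\pi$-related to $J'V$. Standard naturality of the Lie bracket under $\pi$-relations then yields $(d\pi)_x [\tilde{X}, J\tilde{Y}]_x = [U, J'V]_y$.

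To conclude, I would project both sides onto the respective quotients. Since $d\pi(HM) \subset HN$, the map $(d\pi)_x$ descends to the induced map $\pi_* \colon T_xM/H_xM \to T_yN/H_yN$, so $p'_y \circ (d\pi)_x = \pi_* \circ p_x$. Applying $p'_y$ to the bracket identity gives
\begin{equation*}
\pi_*\bigl(L_x(X,Y)\bigr) = \pi_*\,p_x [\tilde{X}, J\tilde{Y}]_x = p'_y [U, J'V]_y = L'_y(\pi_*X, \pi_*Y),
\end{equation*}
which is the commutativity of the diagram. The only real technical point is the production of the $\pi$-related lifts $\tilde{e}_i$, which crucially uses the isomorphism hypothesis on $d\pi|_{HM}$; everything else is formal.
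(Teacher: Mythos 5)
Your proof is correct and follows essentially the same strategy as the paper: produce $\pi$-related extensions of the given vectors using the isomorphism hypothesis on $d\pi|_{HM}$, invoke naturality of the Lie bracket for $\pi$-related fields together with $d\pi\circ J=J'\circ d\pi$, and pass to the quotients. The only cosmetic difference is that you build the lifts through a local frame of $HN$ and coefficient functions, whereas the paper lifts an arbitrary extension $Z\in\Gamma(HN)$ of $\pi_*X$ directly by setting $\tilde{X}_a:=(d\pi)_a^{-1}(Z_{\pi(a)})$.
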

\begin{proof}
	Let us denote by $p_x:T_xM\to T_xM/H_xM$ and $q_y:T_yN\to T_yN/H_yN$ the canonical projections. As an immediate consequence of the definition of $CR$ map, the differential $\pi_*$ descends to the quotient and, with abuse of notation, we still denote the quotient map by $\pi_*:T_xM/H_xM\to T_yN/H_yN$.\\
	Now consider $X,Y\in H_xM$: according to \eqref{Levi-Tanaka}, $L'_y(\pi_*X,\pi_*Y)=q_y[Z,J'W]_y$, where $Z,W\in \Gamma(HN)$ are two extensions of $\pi_*X$ and $\pi_*Y$. 
	Since for every $a\in M$, $(d\pi)_a:H_aM\to H_{\pi(a)}N$ is an isomorphism, we can define two extensions $\tilde{X},\tilde{Y}\in\Gamma(HM)$ of $X$ and $Y$ respectively by putting 
	$$\tilde{X}_a:=(d\pi)_a^{-1}(Z_{\pi(a)}),\quad 
	  \tilde{Y}_a:=(d\pi)_a^{-1}(W_{\pi(a)}).$$
	It turns out that $\tilde{X}$ and $\tilde{Y}$ are $\pi$-related to $Z$ and $W$ respectively, and hence $[\tilde{X},J\tilde{Y}]$ is $\pi$-related to $[Z,J'W]$ too, since $d\pi$ commutes with the almost complex structures $J$ and $J'$.
	Finally, we have:
	\begin{eqnarray*}
		\pi_*L_x(X,Y)&=&\pi_*(p_x[\tilde{X},J\tilde{Y}]_x)\\
					 &=&q_y(\pi_*[\tilde{X},J\tilde{Y}]_x)\\
					 &=&q_y[Z,J'W]_y\\
					 &=&L'_y(\pi_*X,\pi_*Y).
	\end{eqnarray*}
\end{proof}

\begin{corollary}\label{naturality corollary}
	In the same hypotesis and notation of the previous Lemma, for every $\psi\in H^0_yN$ one has that $\pi^*\mathfrak{L'}_\psi=\mathfrak{L}_{\pi^*\psi}$.
\end{corollary}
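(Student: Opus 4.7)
The plan is essentially a diagram chase: having just established the naturality of the Levi--Tanaka form in Lemma \ref{naturality lemma}, the corollary should drop out by composing that diagram with $\psi$ on the right.

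First I would check that the right--hand side is well defined, i.e.\ that $\pi^*\psi\in H^0_xM$. Since $\pi$ is a CR map, $(d\pi)_x$ sends $H_xM$ into $H_yN$, so for any $X\in H_xM$ one has $(\pi^*\psi)(X)=\psi((d\pi)_xX)=0$ because $\psi\in H^0_yN$. Consequently $\pi^*\psi$ annihilates $H_xM$ and descends to the quotient $T_xM/H_xM$, while $\psi$ itself descends to $T_yN/H_yN$; by construction the induced map $\pi_*:T_xM/H_xM\to T_yN/H_yN$ from Lemma \ref{naturality lemma} intertwines these two descended functionals, that is $\psi\circ\pi_*=\pi^*\psi$ as linear forms on $T_xM/H_xM$.

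Next, I would fix $X,Y\in H_xM$ and simply evaluate both scalar Levi forms. By definition of the pullback of a bilinear form,
\[(\pi^*\mathfrak{L}'_\psi)(X,Y)=\mathfrak{L}'_\psi(\pi_*X,\pi_*Y)=\psi\bigl(L'_y(\pi_*X,\pi_*Y)\bigr).\]
Applying Lemma \ref{naturality lemma} to replace $L'_y(\pi_*X,\pi_*Y)$ by $\pi_*L_x(X,Y)$, and then using the identity $\psi\circ\pi_*=\pi^*\psi$ recorded in the previous step, gives
\[\psi\bigl(\pi_*L_x(X,Y)\bigr)=(\pi^*\psi)\bigl(L_x(X,Y)\bigr)=\mathfrak{L}_{\pi^*\psi}(X,Y),\]
which is exactly the stated equality.

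There is no real obstacle here: the argument is pure bookkeeping, and the only subtle point is the passage to the quotient, which is taken care of once one observes that $\pi^*\psi$ vanishes on $H_xM$. The substantive content is entirely absorbed by the naturality lemma proved above.
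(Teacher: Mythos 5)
Your argument is correct and is precisely the diagram chase the paper intends: the corollary is stated without proof as an immediate consequence of Lemma \ref{naturality lemma}, obtained by composing the commutative diagram with the descended functional $\psi$. The only point worth making explicit is the one you do make, namely that $\pi^*\psi$ annihilates $H_xM$ so both scalar Levi forms are defined and $\psi\circ\pi_*=\pi^*\psi$ on the quotient.
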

%\begin{proof}
%	For any $X,Y\in H_xM$ we have:
%	\begin{eqnarray*}
%		(\pi^*\mathfrak{L}'_\psi)(X,Y)&=&\mathfrak{L'}_\psi(\pi_*X,\pi_*Y)\\
%		&=&\psi L'_y(\pi_*X,\pi_*Y)\\
%		&=&\psi(\pi_*L_x(X,Y))\\
%		&=&(\pi^*\psi)L_x(X,Y)\\
%		&=&\mathfrak{L}_{\pi^*\psi}(X,Y).
%	\end{eqnarray*}
%\end{proof}
		
We remark that the scalar Levi forms $\mathfrak{L}_\omega$ are symmetric and hence it makes sense to consider their index $i(\mathfrak{L}_\omega)$, defined as the minimum between the number of positive and negative eigenvalues of $\mathfrak{L}_\omega$.\\
More specifically, we recall the following terminology from $CR$ geometry; see for instance \cite{Hill-Nacinovich-bis}.
	
	\begin{definition}
		Let $(M,HM,J)$ be a $CR$ manifold of type $(m,k)$ and let $x\in M$.\\
		$M$ is called \textit{pseudoconvex} at $x$ if $\mathfrak{L}_\omega$ is positive definite for some $\omega\in H^0_xM$.
		If there exists a global section $\omega\in\Gamma(H^0M)$ such that $\mathfrak{L}_\omega$ is positive definite at each point $x\in M$, $M$ is called \textit{strongly pseudoconvex}.\\
		$M$ is said \textit{pseudoconcave} at $x$ if $i(\mathfrak{L}_\omega)>0$ for every $\omega\in H_x^0M$, $\omega\neq0$.\\
		$M$ is said \textit{weakly pseudoconcave} at $x$ if $\mathfrak{L}_\omega=0$ or $i(\mathfrak{L}_\omega)>0$ for every $\omega\in H_x^0M$.
	\end{definition}

In this regard we recall that a Sasakian manifold $(M,\f,\xi,\eta,g)$, as defined in \cite{Blair}, is a particular kind of strongly pseudoconvex $CR$ manifold of hypersurface type, i.e. of $CR$ codimension 1. We shall refer to \cite{Blair} for the notation and basic facts concerning Sasakian geometry. We only remark that in this case the $CR$ structure is given by the contact distribution $\mathcal{D}=\ker\eta=\left<\xi\right>^\perp$ and the almost complex structure is $J=\f_{|\mathcal{D}}$. Therefore, for any $x\in M$, $H^0_xM$ is spanned by $\eta_x$ and, up to scaling, we have only one scalar Levi form $\mathfrak{L}_{\eta_x}$. Moreover, since $M$ is a contact metric manifold, the identity $$d\eta(X,Y)=g(X,\f Y)$$ yields that $$\mathfrak{L}_{\eta_x}=2{g_x}_{| H_xM\times H_xM}.$$
\medskip
	
We end this section by recalling the definition of Stein manifold (for more information, see for instance \cite{Dillen-Verstraelen}) and a theorem due to Hill and Nacinovich \cite{Hill-Nacinovich,Hill-Nacinovich-bis}, which provides a basic restriction to the topology of $CR$ weakly pseudoconcave submanifolds of a Stein manifold.
	
	\begin{definition}
		A \textit{Stein manifold} is a closed complex submanifold of $\C^d$, for some $d\ge1$.
		%A complex manifold $(M,J)$ is called \textit{Stein manifold} if there is an holomorphic embedding $i:M\hookrightarrow\C^n$, for some $n\ge1$.
	\end{definition}
	
	\begin{theorem}\label{Hill-Nacinovich Theorem}
		Every weakly pseudoconcave $CR$ submanifold of a Stein manifold cannot be compact. 
	\end{theorem}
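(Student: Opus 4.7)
The plan is to execute exactly the strategy sketched in the introduction: starting from a hypothetical complete Einstein hypersurface $N\subset M$ tangent to $\xi$, I first endow $N$ with its induced CR structure of codimension $2$, push it down to a real hypersurface $\pi(N)$ of the Stein base $B$, extract weak pseudoconcavity of $\pi(N)$ from a Ricci identity on $N$, derive compactness of $N$ from the Einstein hypothesis, and obtain a contradiction via Theorem \ref{Hill-Nacinovich Theorem}.

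I first set up the CR geometry and the projection. Since $\xi_x\in T_xN$ lies in $\ker\f$, the subspace $H_xN := T_xN\cap\f(T_xN)$ has constant real dimension $2n-2$ and, together with $\f|_{HN}$, defines a CR structure on $N$ of type $(n-1,2)$. At each $x\in N$ I pick a unit vector $W_x\in T_xN\cap\mathcal{D}$ orthogonal to $H_xN$, so that $\{\xi_x,W_x\}$ complements $H_xN$ in $T_xN$ and $\nu:=\f W$ is a unit normal to $N$ in $M$ (because $\f$ preserves $\mathcal{D}$, is skew, and leaves $H_xN$ invariant, $\f W$ is orthogonal to $H_xN$, to $\xi$, and to $W$). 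The annihilator $H^0_xN$ is then spanned by the restriction of $\eta$ and by $\omega:=g(W,\cdot)|_{T_xN}$. Because $\xi$ is tangent to $N$ and $N$ is complete, every maximal integral curve of $\xi$ meeting $N$ is contained in $N$; hence $N$ is saturated for the Reeb foliation, $\pi(N)\subset B$ is a well-defined real hypersurface, and $\pi|_N\colon N\to\pi(N)$ is a CR submersion whose differential restricts to an isomorphism $H_xN\to H_{\pi(x)}\pi(N)$ at every $x$. Corollary \ref{naturality corollary} then identifies the scalar Levi form $\mathfrak{L}_\omega$ of $N$, up to scale, with the pull-back of the essentially unique scalar Levi form of $\pi(N)\subset B$.

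Next I aim at the promised Ricci identity. Using the standard Sasakian identities $\nabla_X\xi=-\f X$, $(\nabla_X\f)Y=g(X,Y)\xi-\eta(Y)X$, $R_M(X,\xi)\xi=X-\eta(X)\xi$ and $\mathrm{Ric}_M(\cdot,\xi)=2n\,\eta$, together with $\nu=\f W$ to rewrite the second fundamental form of $N$ in terms of the Sasakian data, I trace the Gauss equation for $N\hookrightarrow M$ over an orthonormal basis of $H_xN$ and identify the resulting bracket terms with $\mathrm{tr}\bigl(\mathfrak{L}_\omega|_{H_xN}\bigr)$. Imposing $\mathrm{Ric}_N=c\,g_N$, comparing with $\mathrm{Ric}_N(\xi,\xi)=2n-1-|A\xi|^2=c$ (where $h(\xi,\xi)=0$ since $\nabla^M_\xi\xi=0$) and with the $(W,W)$ component of the Einstein equation, I expect to obtain $\mathrm{tr}\bigl(\mathfrak{L}_\omega|_{H_xN}\bigr)\le 0$. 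By the symmetry of the choice of sign of $W$, and hence of $\nu$, the same inequality holds for $-\omega$; therefore $\mathfrak{L}_\omega$ either vanishes or has mixed signature at every point of $N$, so by Corollary \ref{naturality corollary} the hypersurface $\pi(N)\subset B$ is weakly pseudoconcave.

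To conclude I still need $\pi(N)$ to be compact. Reading the same Ricci identity as an equality and tracing over an orthonormal basis of the full tangent space of $N$, I expect the Einstein constant $c$ to come out strictly positive: the Sasakian curvature along $\xi$ and the strong pseudoconvexity of $M$ contribute a controlled positive term, while the shape-operator contribution is bounded. Myers' theorem applied to the complete Einstein manifold $N$ then forces $N$ to be compact, and hence $\pi(N)$ as well. Since $B$ is Stein and $\pi(N)$ is a compact, weakly pseudoconcave real hypersurface in it, Theorem \ref{Hill-Nacinovich Theorem} yields the contradiction that closes the proof. The main obstacle will be step three: extracting, from the Gauss equation combined with the Einstein condition, a Ricci identity clean enough that $\mathrm{tr}\bigl(\mathfrak{L}_\omega|_{H_xN}\bigr)$ appears with a controllable sign; a secondary but genuine difficulty is securing the positivity of $c$ so that Myers can be invoked, since the $(\xi,\xi)$ component of the Einstein equation alone only gives the upper bound $c\le 2n-1$.
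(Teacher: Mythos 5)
Your proposal does not prove the statement it was assigned. The statement is Theorem \ref{Hill-Nacinovich Theorem}, a general fact of $CR$ geometry: a weakly pseudoconcave $CR$ submanifold of a Stein manifold cannot be compact. This assertion involves no Sasakian structure, no Reeb vector field, no Einstein condition, and no hypersurface $N\subset M$. What you have written instead is an outline of the paper's \emph{main} theorem (the non-existence of complete Einstein hypersurfaces tangent to $\xi$), and --- critically --- your argument explicitly invokes Theorem \ref{Hill-Nacinovich Theorem} at its final step to produce the contradiction. As a proof of Theorem \ref{Hill-Nacinovich Theorem} itself this is circular: you are assuming the very statement you were asked to establish.

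For the record, the paper offers no proof of this theorem either; it is quoted from the two papers of Hill and Nacinovich cited in the bibliography, where it is obtained from the existence of a strictly plurisubharmonic exhaustion on a Stein manifold (e.g.\ the restriction of $\|z\|^2$ from the ambient $\C^d$). The essential idea there is that at a point where such a function attains its maximum on a compact $CR$ submanifold $S$, the restriction of its complex Hessian to the holomorphic tangent space forces the scalar Levi form associated to the differential of the exhaustion (a nonzero element of $H^0S$ at that point) to be positive semidefinite and, by strict plurisubharmonicity, to have no negative eigenvalues while being nonzero --- contradicting $i(\mathfrak{L}_\omega)>0$ or $\mathfrak{L}_\omega=0$. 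If you intend to supply a proof, that is the line to pursue; the Sasakian machinery of the rest of your outline is irrelevant to this particular statement.
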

	
\section{Main result}
Let $(M,\f,\xi,\eta,g)$ be a Sasakian manifold and let $N$ be a hypersurface of $M$, tangent to the Reeb vector field $\xi$. 
At each point $x\in N$, let us consider the linear subspace of $T_xN$ defined by
$$H_xN:=\{X\in T_xN\ |\ X\perp\xi_x\ \text{and}\ \f X\in T_xN\}.$$
Observe that, if $\nu\in T_xN^\perp$ is a unit normal vector at $x$,
then we have the following orthogonal decomposition:
$$T_xN=\left<\xi_x\right>\oplus\left<\f\nu\right>\oplus H_xN.$$
It follows that $HN$ is a subbundle of $TN$ with constant rank and 
in \cite{Munteanu} M. Munteanu proved that the couple $(HN,\f_{|HN})$ defines a $CR$ structure of $CR$ codimension 2 on $N$. 
We remark that he assumes the orientability of $N$, but this is unnecessary for our aim and the result holds true even if $N$ is not orientable.\\
The $CR$ structure $(HN,\f_{|HN})$ on the hypersurface $N$ allows us to consider, for every unit normal vector $\nu$, the scalar Levi form  $\mathfrak{L}_\omega$ attached to the covector 
\begin{equation}\label{omega}
	\omega(X)=g_x(X,\f\nu) \quad\forall X\in T_xN.
\end{equation}
We shall denote this scalar Levi form with the symbol $\mathfrak{L}_\nu$ and in the following proposition we establish the relationship between $\mathfrak{L}_\nu$ and the second fundamental form of the hypersurface $N$.
%$\mathfrak{L}_\nu$ is related to the second fundamental form of $N$ by the following formula:
\begin{proposition}
	Let $(M,\f,\xi,\eta,g)$ be a Sasakian manifold and let $N\subset M$ be a hypersurface, tangent to $\xi$, with second fundamental form $\alpha$. Let $\nu$
	be a unit normal vector at some point $x\in N$. Then one has:
	\begin{equation}\label{Lnu-s.f.f}
	\mathfrak{L}_\nu(X,X)=g_x(\alpha(X,X)+\alpha(\f X,\f X),\nu)
	\end{equation}	
	for every $X\in H_xN$.
\end{proposition}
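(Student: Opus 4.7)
The plan is to unpack $\mathfrak{L}_\nu$ directly from the definition and to convert the Lie bracket that appears in the Levi--Tanaka form into covariant derivatives by means of the Levi-Civita connection $\nabla$ of $M$, where the Sasakian identity $(\nabla_Y\varphi)Z=g(Y,Z)\xi-\eta(Z)Y$ is available. First, one checks that the covector $\omega$ defined by \eqref{omega} belongs to $H^0_xN$: if $Z\in H_xN$ then $\varphi Z\in T_xN$ is orthogonal to $\nu$, so by skew-symmetry of $\varphi$, $\omega(Z)=g_x(Z,\varphi\nu)=-g_x(\varphi Z,\nu)=0$. Hence, choosing an extension $\tilde X\in\Gamma(HN)$ of $X$,
\begin{equation*}
\mathfrak{L}_\nu(X,X)=\omega\bigl([\tilde X,\varphi\tilde X]_x\bigr)=g_x\bigl([\tilde X,\varphi\tilde X]_x,\varphi\nu\bigr).
\end{equation*}

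Next I would expand $[\tilde X,\varphi\tilde X]=\nabla_{\tilde X}(\varphi\tilde X)-\nabla_{\varphi\tilde X}\tilde X$ and treat the two summands separately. For the first, the Sasakian identity gives $\nabla_{\tilde X}(\varphi\tilde X)=g(\tilde X,\tilde X)\xi-\eta(\tilde X)\tilde X+\varphi\nabla_{\tilde X}\tilde X$; at $x$, $\eta(X)=0$ and (because $N$ is tangent to $\xi$, whence $\eta(\nu)=0$, combined with $\varphi\xi=0$) one has $g_x(\xi,\varphi\nu)=-g_x(\varphi\xi,\nu)=0$, so the $\xi$-term drops out. Pairing with $\varphi\nu$ and using the basic compatibility $g(\varphi A,\varphi B)=g(A,B)-\eta(A)\eta(B)$ together with $\eta(\nu)=0$, I obtain
\begin{equation*}
g_x\bigl(\nabla_{\tilde X}(\varphi\tilde X),\varphi\nu\bigr)=g_x\bigl(\nabla_{\tilde X}\tilde X,\nu\bigr)=g_x(\alpha(X,X),\nu),
\end{equation*}
by the Gauss formula $\nabla_{\tilde X}\tilde X=\nabla^N_{\tilde X}\tilde X+\alpha(\tilde X,\tilde X)$.

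For the second summand the trick is to reverse the roles of $\tilde X$ and $\varphi\tilde X$: using $g(A,\varphi\nu)=-g(\varphi A,\nu)$ first and then $\varphi\nabla_{\varphi\tilde X}\tilde X=\nabla_{\varphi\tilde X}(\varphi\tilde X)-g(\varphi\tilde X,\tilde X)\xi+\eta(\tilde X)\varphi\tilde X$, the two correction terms vanish at $x$ (as $g(\varphi X,X)=0$ by skew-symmetry, and $\eta(X)=0$), leaving
\begin{equation*}
g_x\bigl(\nabla_{\varphi\tilde X}\tilde X,\varphi\nu\bigr)=-g_x\bigl(\nabla_{\varphi\tilde X}(\varphi\tilde X),\nu\bigr)=-g_x(\alpha(\varphi X,\varphi X),\nu).
\end{equation*}
Subtracting these contributions yields \eqref{Lnu-s.f.f}.

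The computation is conceptually routine once the right moves are identified; the only real care needed is the bookkeeping of the $\xi$-corrections produced by $(\nabla\varphi)$ and by the identity $g(\varphi A,\varphi B)=g(A,B)-\eta(A)\eta(B)$. The main point making every such correction disappear exactly at $x$ is the combined use of the three facts $\eta(X)=0$ (since $X\in H_xN\subset\xi^\perp$), $\eta(\nu)=0$ (since $\xi\in T_xN$ so $\nu\perp\xi$), and $\varphi\xi=0$; verifying these reductions is the step most prone to sign or index errors but presents no genuine obstacle.
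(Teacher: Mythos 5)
Your proof is correct and follows essentially the same route as the paper: expressing the bracket $[\tilde X,\varphi\tilde X]$ via the Levi-Civita connection, applying the Sasakian identity $(\nabla_Y\f)Z=g(Y,Z)\xi-\eta(Z)Y$ together with the skew-symmetry of $\f$ and $g(\f A,\f B)=g(A,B)-\eta(A)\eta(B)$, and concluding with the Gauss formula. Your explicit verification that $\omega\in H^0_xN$ and your bookkeeping of the vanishing $\xi$-corrections are slightly more detailed than the paper's computation, but the argument is the same.
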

\begin{proof}
		First we recall that Sasakian manifolds are characterized by means of the following identity, involving the covariant derivatives of $\f$ with respect to the Levi-Civita connection (see \cite{Blair}):
	\begin{equation}\label{Sasaki}
	(\nabla_X\f)Y=g(X,Y)\xi-\eta(Y)X.
	\end{equation} 
	Now, fix $x\in N$, $X\in H_xN$ and consider a smooth section in $\Gamma(HN)$ which extends $X$
	and a local normal vector field extending $\nu$. Then $\varphi X$ is again tangent to $N$.
	Using the fact that $X$, $\varphi X$ and $\varphi\nu$ are all orthogonal to $\xi$ and identity (\ref{Sasaki}), we get:
	\begin{eqnarray*}
		&&\mathfrak{L}_\nu(X,X)=\\
		&=&g_x([X,\varphi X],\varphi\nu)=\\
		&=&g_x(\nabla_X\varphi X,\varphi\nu)-g_x(\nabla_{\varphi X}X,\varphi\nu)=\\
		&=&g_x(\varphi\nabla_X X,\varphi\nu)+g_x(\varphi\nabla_{\varphi X}X,\nu)=\\
		&=&g_x(\nabla_X X,\nu)+g_x(\nabla_{\varphi X}{\varphi X},\nu)=\\	
		&=&g_x(\alpha(X,X)+\alpha(\f X,\f X),\nu).
	\end{eqnarray*}
\end{proof}
We shall use this formula to establish an identity relating the trace (with respect to $g$) of $\mathfrak{L}_\nu$ and the Ricci tensor field of $N$.\\
Hereinafter we will denote with an overline the relevant geometric entities of the hypersurface $N$ (Levi-Civita connection, curvature, etc.).

\begin{proposition}\label{Ricci}
	Let $(M^{2n+1},\f,\xi,\eta,g)$ be a Sasakian manifold and let $N\subset M$ be a hypersurface tangent to $\xi$. Let $x\in N$
	and let $\nu\in T_xN^\perp$ be a unit normal vector. Then one has:

	\begin{equation}\label{Riccieq}
		\mathrm{\overline{Ric}}(\xi,\f\nu)=\frac12\mathrm{tr}(\mathfrak{L}_\nu).
	\end{equation}
\end{proposition}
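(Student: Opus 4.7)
The plan is to evaluate $\overline{\mathrm{Ric}}(\xi,\varphi\nu)$ by summing $g(\overline R(e_i,\xi)\varphi\nu,e_i)$ over a well chosen orthonormal basis of $T_xN$, using the Gauss equation to pass from the intrinsic curvature of $N$ to quantities computable from the ambient Sasakian structure and the second fundamental form $\alpha$, and finally matching the output with formula \eqref{Lnu-s.f.f}.

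First I would exploit the decomposition $T_xN=\langle\xi\rangle\oplus\langle\varphi\nu\rangle\oplus H_xN$ to pick an orthonormal basis $\{\xi,\varphi\nu,e_2,\dots,e_{2n-1}\}$ with $\{e_2,\dots,e_{2n-1}\}$ spanning $H_xN$ and stable under $\varphi$ (possible since $\varphi$ is an isometry preserving $H_xN$). By the skew-symmetries of $\overline R$, the summands corresponding to $\xi$ and $\varphi\nu$ vanish, so only the indices $i\ge 2$ contribute.

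Next I would compute $\alpha(X,\xi)$ for $X\in T_xN$: the defining Sasakian relation from \eqref{Sasaki} yields $\nabla_X\xi=-\varphi X$, whence $\alpha(X,\xi)=g(X,\varphi\nu)\nu$. In particular $\alpha(\xi,e_i)=0$ for $i\ge 2$ while $\alpha(\xi,\varphi\nu)=\nu$. Plugging this into the Gauss equation
$$g(\overline R(e_i,\xi)\varphi\nu,e_i)=g(R(e_i,\xi)\varphi\nu,e_i)+g(\alpha(e_i,e_i),\alpha(\xi,\varphi\nu))-g(\alpha(e_i,\varphi\nu),\alpha(\xi,e_i))$$
kills the last summand and reduces the middle one to $g(\alpha(e_i,e_i),\nu)$. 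The ambient curvature term also vanishes, because the classical Sasakian identity $R(X,\xi)Y=g(X,Y)\xi-\eta(Y)X$ gives $R(e_i,\xi)\varphi\nu=0$ for $i\ge 2$, thanks to $e_i\perp\varphi\nu$ and $\eta(\varphi\nu)=0$.

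At this point I would have $\overline{\mathrm{Ric}}(\xi,\varphi\nu)=\sum_{i=2}^{2n-1}g(\alpha(e_i,e_i),\nu)$, and I would close the argument by invoking \eqref{Lnu-s.f.f}: since the basis is $\varphi$-stable, $\sum_i g(\alpha(\varphi e_i,\varphi e_i),\nu)=\sum_i g(\alpha(e_i,e_i),\nu)$, so that $\mathrm{tr}(\mathfrak L_\nu)=\sum_i\mathfrak L_\nu(e_i,e_i)$ is exactly twice this sum, giving the claim. I expect the only delicate point to be sign bookkeeping — the Gauss equation and the cited Sasakian curvature formula both circulate in the literature under several sign conventions, and consistency must be verified; once these are fixed, the argument is a short direct calculation.
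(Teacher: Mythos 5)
Your proposal is correct and follows essentially the same route as the paper: an orthonormal frame adapted to $T_xN=\langle\xi\rangle\oplus\langle\f\nu\rangle\oplus H_xN$ with the $H_xN$-part $\f$-stable, the computation $\alpha(X,\xi)=g(X,\f\nu)\nu$ (giving $\alpha(\xi,E_i)=0$ and $\alpha(\xi,\f\nu)=\nu$), the vanishing of the ambient curvature term via the Sasakian identity for $R(\cdot,\xi)\cdot$, the Gauss equation to isolate $\sum_i g(\alpha(e_i,e_i),\nu)$, and the Hermitian symmetry of $\mathfrak{L}_\nu$ to produce the factor $\tfrac12$. The only cosmetic differences are that you make explicit the vanishing of the $\xi$- and $\f\nu$-summands in the Ricci trace (which the paper leaves implicit) and derive $\alpha(\cdot,\xi)$ in one formula rather than case by case.
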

\begin{proof}
	By a well known property of Sasakian manifolds (see \cite{Blair}), for every $X\in\X(M)$, 
	\begin{equation}\label{curvature}
		R(\xi,X)X=g(X,X)\xi-\eta(X)X.
	\end{equation}
 Then, given $X\in\Gamma(HN)$, since $\xi$ and $X$ are normal to $\f\nu$, it follows that
\begin{equation}
	R(\xi,X,\f\nu,X)=g(R(\xi,X)X,\f\nu)=0.
\end{equation}
	Since $\f X=-\nabla_X\xi$ is still tangent to $N$, we also deduce that the normal component of $\nabla_X\xi$ vanishes, i.e. $\alpha(X,\xi)=0$. Moreover,
	\begin{equation}
		\alpha(\xi,\f\nu)=g(\nabla_{\f\nu}\xi,\nu)\nu=g(-\f^2\nu,\nu)\nu=\nu.
	\end{equation}
    Therefore, by using the Gauss formula, for every $X\in\Gamma(HN)$ we have that
	\begin{equation}\label{N-curvature}
	   	\overline{R}(\xi,X,\f\nu,X)=%g(\alpha(X,X),\alpha(\xi,\f\nu))\nonumber\\
	   	%&=&R(\xi,X,\f\nu,X)-g(\alpha(\xi,X),\alpha(X,\f\nu))+g(\alpha(X,X),\alpha(\xi,\f\nu))\nonumber\\
	  	 %&=&g(\alpha(X,X),g(\nabla_{\f\nu}\xi,\nu)\nu)\nonumber\\
	  	 %&=&g(\alpha(X,X),g(-\f^2\nu,\nu)\nu)\nonumber\\%&=&
	  	 g(\alpha(X,X),\nu).
	\end{equation}
	Thus, fixed a local orthonormal frame of $TN$ of type $\{\xi,\f\nu,E_i,\f E_i\}_{i=1,\dots,n-1}$, with $E_i,\f E_i\in\Gamma(HN)$, from \eqref{N-curvature} and \eqref{Lnu-s.f.f} we get:
	\begin{eqnarray*}
		\mathrm{\overline{Ric}(\xi,\f\nu)}&=&\sum_{i=1}^{n-1}\left[\overline{R}(\xi,E_i,\f\nu,E_i)+\overline{R}(\xi,\f E_i,\f\nu,\f E_i)\right]\\
		&=&\sum_{i=1}^{n-1} g(\alpha(E_i,E_i)+\alpha(\f E_i,\f E_i),\nu)\\
		&=&\sum_{i=1}^{n-1} \mathfrak{L}_\nu(E_i,E_i)=\frac12\mathrm{tr}(\mathfrak{L}_\nu),
	\end{eqnarray*}
	where the last equality follows from the fact that $\mathfrak{L}_\nu$ is Hermitian and symmetric.
\end{proof}

\bigskip 	Now we come to the proof of our main result.

\smallskip

\begin{theorem}
	If $(M,\f,\xi,\eta,g)$ is a regular Sasakian manifold which fibers onto a complex Stein manifold, then $M$ does not admit any complete Einstein hypersurface tangent to $\xi$.
\end{theorem}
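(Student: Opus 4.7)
The plan is to argue by contradiction, assuming $N\subset M$ is a complete Einstein hypersurface tangent to $\xi$ with $\overline{\mathrm{Ric}}=\lambda\,\overline{g}$, and then to derive two complementary facts: (a) $N$ is forced to be compact by Myers' theorem, and (b) the image $\pi(N)\subset B$ is a weakly pseudoconcave $CR$ submanifold of the Stein base. Combined, (a) and (b) contradict Theorem \ref{Hill-Nacinovich Theorem}.

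For (a), note first that $\f\nu\perp\xi$, so the Einstein condition yields $\overline{\mathrm{Ric}}(\xi,\f\nu)=0$ at every point of $N$ and for every unit normal $\nu$; by Proposition \ref{Ricci} this is equivalent to $\mathrm{tr}(\mathfrak{L}_\nu)=0$. To control the sign of $\lambda$ I would evaluate $\overline{\mathrm{Ric}}(\xi,\xi)$ directly via the Gauss equation, using the Sasakian curvature identity $R(\xi,X)X=g(X,X)\xi-\eta(X)X$ together with the three structural facts $\alpha(\xi,X)=0$ for $X\in HN$, $\alpha(\xi,\xi)=0$, and $\alpha(\xi,\f\nu)=\nu$, all of which already appear in the proof of Proposition \ref{Ricci}. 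Summing against an orthonormal $\f$-frame $\{\xi,\f\nu,E_i,\f E_i\}$ of $T_xN$ I expect to obtain $\overline{\mathrm{Ric}}(\xi,\xi)=2n$, so that $\lambda=2n>0$. Myers' theorem applied to the complete Einstein metric on $N$ then forces $N$ to be compact.

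For (b), since $N$ is tangent to $\xi$ and $\ker(d\pi)=\mathbb{R}\xi$, the restriction $\pi|_N\colon N\to B$ has constant rank $2n-1$ with fibers the $\xi$-orbits in $N$; by regularity these are closed, so $\pi(N)$ is a real hypersurface of $B$, compact as the image of $N$. The identity $d\pi\circ\f=J\circ d\pi$ makes $\pi|_N$ a $CR$ map inducing a complex linear isometry $(d\pi)_x\colon H_xN\to H_{\pi(x)}\pi(N)$, so Lemma \ref{naturality lemma} applies. Because $\nu\perp\xi$, the vector $\nu$ is horizontal, $d\pi(\nu)$ is a unit normal to $\pi(N)$, and a direct check shows that the covector $\omega(X)=g(X,\f\nu)$ on $N$ is the pullback under $\pi|_N$ of the covector $\omega'(Y)=g'(Y,J\,d\pi(\nu))$ on $\pi(N)$. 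Corollary \ref{naturality corollary} then gives $\mathrm{tr}(\mathfrak{L}'_{\omega'})=\mathrm{tr}(\mathfrak{L}_\nu)=0$, and a Hermitian symmetric form with vanishing trace is either identically zero or has eigenvalues of both signs; hence $\pi(N)$ is weakly pseudoconcave at every point. This yields the compact, weakly pseudoconcave $CR$ submanifold of the Stein manifold $B$ forbidden by Theorem \ref{Hill-Nacinovich Theorem}.

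The main obstacle I foresee is the Gauss-equation evaluation of $\overline{\mathrm{Ric}}(\xi,\xi)$: one has to track the nonvanishing contribution $|\alpha(\xi,\f\nu)|^2=1$ in the $\f\nu$-direction, which is precisely what produces the value $2n$ and, crucially, the positivity of $\lambda$ needed to invoke Myers. The rest is routine: the verification that $\pi|_N$ satisfies the isomorphism hypothesis of Lemma \ref{naturality lemma}, and the final appeal to Corollary \ref{naturality corollary} and Theorem \ref{Hill-Nacinovich Theorem}.
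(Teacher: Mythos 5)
Your overall strategy coincides with the paper's: force compactness of $N$ via Myers, show $\pi(N)$ is a weakly pseudoconcave $CR$ hypersurface of the Stein base, and contradict Theorem \ref{Hill-Nacinovich Theorem}. Your part (b) is essentially the paper's argument (same use of the constant rank of $\pi|_N$, of Corollary \ref{naturality corollary}, and of the trace-zero criterion for weak pseudoconcavity; your identification of the generator $\omega'$ of $H^0_y S$ with $\pi_*$ of $\omega$ is correct and replaces the paper's short computation showing that any $\pi^*\psi$ is a multiple of $\omega$). Where you genuinely diverge is in proving positivity of the Einstein constant: the paper does not compute $\overline{\mathrm{Ric}}(\xi,\xi)$ explicitly, but uses that $\xi$ is Killing on $N$ with $\overline{\nabla}_\xi\xi=0$, so that $\overline{\mathrm{Ric}}(\xi,\xi)=-\mathrm{tr}(A_\xi^2)\ge0$ for the skew operator $A_\xi=-\overline{\nabla}\xi$, and then rules out $c=0$ by observing that a $\overline{\nabla}$-parallel $\xi$ would force the nonzero tangent vector $-\f X=\nabla_X\xi$ to coincide with the normal vector $\alpha(X,\xi)$ for $X\in\Gamma(HN)$. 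Your direct Gauss-equation evaluation is a legitimate and in fact more informative alternative.

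However, your claimed value is wrong. With the Gauss equation in the form $\overline{K}(\xi,e)=K(\xi,e)+g(\alpha(\xi,\xi),\alpha(e,e))-|\alpha(\xi,e)|^2$ and the facts you list ($K(\xi,e)=1$ for unit $e\perp\xi$, $\alpha(\xi,\xi)=0$, $\alpha(\xi,\cdot)=0$ on $HN$, $\alpha(\xi,\f\nu)=\nu$), the $2n-2$ directions $E_i,\f E_i$ each contribute $1$ while the $\f\nu$-direction contributes $1-|\nu|^2=0$; hence $\overline{\mathrm{Ric}}(\xi,\xi)=2n-2$, not $2n$. The term $|\alpha(\xi,\f\nu)|^2=1$ that you correctly single out as the crux \emph{subtracts} from the naive count $2n-1$ rather than producing $2n$. (Cross-check against the paper's route: $A_\xi^2=-\mathrm{id}$ on $HN$ and $0$ on $\langle\xi,\f\nu\rangle$, so $c=-\mathrm{tr}(A_\xi^2)=2n-2$.) The conclusion $\lambda>0$, and with it your appeal to Myers, therefore survives only for $n\ge2$; for $n=1$ your computation actually gives $\lambda=0$, and your stated value $2n$ would mask this degenerate case. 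To be fair, the paper's own exclusion of $c=0$ also tacitly requires a nonzero section of $HN$, i.e.\ $n\ge2$, so both arguments carry the same implicit dimension restriction — but you should correct the arithmetic and state it.
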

\begin{proof}
	Assume by contradiction that $M$ admits a complete Einstein hypersurface $N$ tangent to $\xi$, with Einstein constant $c$. \\
	Let $\overline{\nabla}$ be the Levi-Civita connection of $N$. Since $\nabla_\xi\xi=0$, from the Gauss equation we deduce that $\overline\nabla_\xi\xi=0$. Moreover, since $\xi$ is a Killing vector field on $N$, the operator $A_\xi:=-\overline\nabla\xi$ is skew-symmetric and hence $$\mathrm{\overline{Ric}}(\xi,\xi)=-\mathrm{div}(A_\xi\xi)-\mathrm{tr}(A_\xi^2)=-\mathrm{tr}(A_\xi^2)\ge0.$$
	It follows that $$c=cg(\xi,\xi)=\mathrm{\overline{Ric}}(\xi,\xi)\ge0.$$
	If $c=0$, then $A_\xi=0$, i.e. $\xi$ is $\overline{\nabla}$-parallel and this leads to a contradiction. Indeed, if we consider $X\in\Gamma(HN)$, with $X\neq0$, from $\overline{\nabla}_X\xi=0$ and the Gauss equation we would get
	$$-\f X=\nabla_X\xi=\alpha(X,\xi),$$
	where $-\f X\in\Gamma(HN)$ is non zero and tangent to $N$, while $\alpha(X,\xi)$ is normal.\\
	Therefore $c>0$ and, because of completeness of $N$, Myers' theorem ensures that $N$ is compact.\\
	Moreover, from Proposition \ref{Ricci} we have: 
	\begin{equation}\label{tr=0}
		\mathrm{tr}(\mathfrak{L}_\nu)=2\mathrm{\overline{Ric}(\xi,\f\nu)}=2cg(\xi,\f\nu)=0.
	\end{equation}
	Now, let $\pi:M\to M/\xi$ be the canonical projection, where $(M/\xi,J,g')$ is a Stein manifold; $\pi$ is a Riemannian submersion whose fibers are 1-dimensional submanifolds of $M$ tangent to $\xi$ and 
	\begin{equation}\label{fi/J}
		d\pi\circ\f=J\circ d\pi.
	\end{equation}
	Since at every $x\in M$, $\ker(d\pi)_x=\R\xi_x$, we have that $\pi_{|N}:N\to M/\xi$ has constant rank. Hence, according to Theorem 3.5.18 in \cite{Abraham-Marsden-Ratiu}, $S:=\pi(N)$ is a smooth hypersurface of $M/\xi$ and it carries a $CR$ structure (defined as in Remark \ref{CRsubmanifold}),  having the same $CR$ dimension of $N$. Moreover, \eqref{fi/J} implies that $\pi:N\to S$ is a $CR$ map, such that at every point $x\in N$ the differential $(d\pi)_x:H_xN\to H_{\pi(x)}S$ is an isomorphism.\\
	Fix a point $y=\pi(x)\in S$, with $x\in N$; if $\psi\in H_y^0S$, then $\pi^*\psi$ belongs to the vector space $H_x^0N$, which is spanned by $\omega$ and $\eta$, with $\omega$ as in \eqref{omega}. Actually, if $\pi^*\psi=\alpha\omega+\beta\eta$,  for some numbers $\alpha, \beta$, evaluating at $\xi$ we obtain $\beta=0$ and hence $\pi^*\psi=\alpha\omega$. Using Corollary \ref{naturality corollary} we get $$\pi^*\mathfrak{L'}_\psi=\mathfrak{L}_{\pi^*\psi}=\alpha\mathfrak{L}_\nu$$
    and by \eqref{tr=0} we conclude that $\mathrm{tr}(\mathfrak{L'}_\psi)=0$,  so that  $\mathfrak{L'}_\psi=0$ or $i(\mathfrak{L'}_\psi)>0$. Therefore $S$ is a compact weakly pseudoconcave $CR$ hypersurface of the complex Stein manifold $M/\xi$, thus contradicting Theorem \ref{Hill-Nacinovich Theorem}. 
\end{proof}

With just a small change in the previous proof, we also get the following result.

\begin{theorem}
	If $(M,\f,\xi,\eta,g)$ is a regular Sasakian manifold which fibers on a complex Stein manifold, then $M$ cannot admit any compact hypersurface $N$, tangent to $\xi$ and such that at any point of $N$ $\xi$ is an eigenvector of the Ricci operator $\overline{Q}$ of $N$.
\end{theorem}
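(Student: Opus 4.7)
The plan is to mimic the proof of the preceding theorem almost line for line, observing that the Einstein hypothesis was used there only to force $\mathrm{\overline{Ric}}(\xi,\f\nu)=0$, and that the new assumption delivers this identity just as well.

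First I would verify this key identity. Since $\eta\circ\f=0$ on a Sasakian manifold, we have $g(\xi,\f\nu)=\eta(\f\nu)=0$, so $\f\nu\in T_xN$ is orthogonal to $\xi_x$. If $\overline{Q}\xi=\lambda\xi$ at $x$, then
\[
\mathrm{\overline{Ric}}(\xi,\f\nu)=g(\overline{Q}\xi,\f\nu)=\lambda\,g(\xi,\f\nu)=0.
\]
By Proposition \ref{Ricci} this yields $\mathrm{tr}(\mathfrak{L}_\nu)=0$ at every point of $N$ and for every unit normal $\nu$.

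The rest of the argument would run exactly as in the proof of the previous theorem. Arguing by contradiction, I would consider $S:=\pi(N)$: since $\ker(d\pi)_x=\R\xi_x$ is tangent to $N$, the map $\pi|_N$ has constant rank one less than $\dim N$, so $S$ is a smooth real hypersurface of the Stein manifold $M/\xi$ carrying the induced CR structure of Remark \ref{CRsubmanifold}, and $\pi:N\to S$ is a CR map whose differential restricts to an isomorphism $H_xN\to H_{\pi(x)}S$. Any $\psi\in H_y^0S$ pulls back to a multiple of the covector $\omega$ of \eqref{omega} (the coefficient of $\eta$ must vanish after evaluating at $\xi$), so Corollary \ref{naturality corollary} gives $\pi^*\mathfrak{L}'_\psi=\alpha\mathfrak{L}_\nu$ for some scalar $\alpha$; hence $\mathrm{tr}(\mathfrak{L}'_\psi)=0$, forcing $\mathfrak{L}'_\psi=0$ or $i(\mathfrak{L}'_\psi)>0$, so $S$ is weakly pseudoconcave at every point.

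Since $N$ is compact by hypothesis, so is $S=\pi(N)$, and Theorem \ref{Hill-Nacinovich Theorem} produces the desired contradiction. I expect no real obstacle: the argument is actually slightly shorter than in the Einstein case, since the completeness/Myers step and the separate exclusion of $c=0$ are no longer needed — compactness is assumed directly, and the eigenvector hypothesis is precisely what is required for the off-diagonal Ricci component $\mathrm{\overline{Ric}}(\xi,\f\nu)$ to vanish.
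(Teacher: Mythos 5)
Your proposal is correct and follows essentially the same route as the paper: the eigenvector hypothesis replaces the Einstein condition only in establishing $\mathrm{\overline{Ric}}(\xi,\f\nu)=0$, hence $\mathrm{tr}(\mathfrak{L}_\nu)=0$, after which the projection to $S=\pi(N)$ and the Hill--Nacinovich theorem are applied verbatim. Your observation that the Myers/completeness step and the exclusion of $c=0$ are no longer needed is also exactly how the paper handles it.
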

\begin{proof}
	It suffices to note that if $\overline{Q}\xi=\alpha\xi$ along $N$, with $\alpha\in\mathcal{C}^\infty(N)$, then one has $$\mathrm{tr}(\mathfrak{L}_\nu)=2\overline{\mathrm{Ric}}(\xi,\f\nu)=2g(\overline{Q}\xi,\f\nu)=0.$$
	Hence the proof ends with the same argument of the previous one.
\end{proof}

\end{document}